\documentclass[12pt]{article}

\textwidth = 16 cm \textheight = 22 cm
\oddsidemargin = 0 cm \evensidemargin = 0 cm \topmargin = 0 cm
\parskip = 1 mm

\title{The string of diamonds is nearly tight for rumour spreading\footnote{This work started in the \emph{Random Geometric Graphs and Their
    Applications to Complex Networks} workshop held in the Banff
    International Research Station in November 2016.  The authors thank the workshop
    organizers and BIRS for making it happen. This work was completed at Microsoft Research in
    Redmond. The authors thank Microsoft for their support.}
    \footnote{A preliminary version of this paper will appear in Proceedings of the 21st International Workshop on Randomization and Computation (RANDOM'2017). This full version includes a new result, namely Theorem~\ref{thm:attainable}.}}

\author
{
  \sc{Omer Angel}\thanks{Supported by NSERC.}\\
  Department of Mathematics, University of British Columbia, \\
    \texttt{angel@math.ubc.ca} 
  \and
  \sc{Abbas Mehrabian}\thanks{Supported by an NSERC Postdoctoral Fellowship and a Simons-Berkeley Research Fellowship.  Part of this work was done while this author was at the Simons Institute for the Theory of Computing at UC Berkeley.}\\
  Department of Computer Science, University of British Columbia\\
    \texttt{abbasmehrabian@gmail.com}
  \and
  \sc{Yuval Peres}\\
 Microsoft Research\\ \texttt{peres@microsoft.com}
}

%%%%%%%%%%%%%%%%%%%%%%%%%%%%%%%%%%%%%%%%%%%%%%%
%Added by authors: BEGIN
\usepackage{amsmath,amssymb,amstext,mathtools,url,tikz,amsthm}
\newcommand{\E}{\mathbb{E}}
\renewcommand{\P}{\mathbb{P}}
\renewcommand{\Pr}[1]{\mathbb{P}\left[{#1}\right]}
\newcommand{\e}[1]{\mathbb{E}\left[{#1}\right]}
\DeclareMathOperator{\erv}{Exp}
\DeclareMathOperator{\Poi}{Poi}
\renewcommand{\S}{\mathcal{S}}
%Added by authors: end
%%%%%%%%%%%%%%%%%%%%%%%%%%%%%%%%%%%%%%%%%%%%%%%

\theoremstyle{plain}
\newtheorem{theorem}{Theorem}[section]
\newtheorem{lemma}[theorem]{Lemma}

\newtheorem{corollary}[theorem]{Corollary}

\theoremstyle{definition}
\newtheorem*{definition}{Definition}

\begin{document}

\maketitle

\begin{abstract}
  For a rumour spreading protocol, the spread time is defined as the first time that everyone learns the rumour.  We compare the synchronous
  push\&pull rumour spreading protocol with its asynchronous variant, and
  show that for any $n$-vertex graph and any starting vertex, the ratio
  between their expected spread times is bounded by
  $O \left({n}^{1/3}{\log^{2/3} n}\right)$.  This improves the $O(\sqrt n)$ upper
  bound of Giakkoupis, Nazari, and Woelfel (in Proceedings of ACM
  Symposium on Principles of Distributed Computing, 2016).  Our bound is
  tight up to a factor of $O(\log n)$, as illustrated by the string
  of diamonds graph.
We also show that if for a pair $\alpha,\beta$ of real numbers, there exists infinitely many graphs for which the two spread times are $n^{\alpha}$ and $n^{\beta}$ in expectation, then $0\leq\alpha \leq 1$ and $\alpha \leq \beta \leq \frac13 + \frac23 \alpha$;
and we show each such pair $\alpha,\beta$ is achievable.
\end{abstract}

Keywords:
randomized rumour spreading, push\&pull protocol, asynchronous time model, string of diamonds.

2010 Mathematics subject classification:
68Q87 (primary),
05C81, 60C05,  68W15 (secondary).

\section{Introduction}

Randomized rumour spreading is an important paradigm for information dissemination in networks with numerous applications in network science, ranging from spreading of information on the web or Twitter, to diffusion of ideas an spreading of viruses in human communities.
A well studied rumour spreading protocol is the  \emph{(synchronous) {push\&pull} protocol}, introduced by Demers, Greene, Hauser, Irish, Larson, Shenker, Sturgis, Swinehart, and Terry~\cite{DGH+87} and popularized by Karp, Schindelhauer, Shenker, and V\"ocking~\cite{KSSV00}.

\begin{definition}[Synchronous push\&pull protocol]
  Suppose that one node $s$ in a network $G$ is aware of a piece of  information, the `rumour', and wants to spread it to all nodes quickly.
  The synchronous protocol proceeds in rounds; in each round $1,2,\dots$, all  vertices perform their random actions simultaneously.
  Each vertex $x$ calls a random neighbour $y$, and the two share any information they may have:
  If $x$ knows the rumour and $y$ does not, then $x$ tells $y$ the rumour (a \emph{push} operation); if $x$ does not know the rumour and $y$ knows it, $y$ tells $x$ the rumour (a \emph{pull} operation).
  Note that this is a synchronous protocol, e.g.\ a vertex that receives a rumour in a certain round cannot also send it on in the same round, even though the vertex may be involved in multiple simultaneous calls initiated by other vertices.
  The \textbf{synchronous spread time} of $G$, denoted by $S(G,s)$, is the first time that everyone knows the rumour.
  This is a discrete random variable.
\end{definition}

A point to point communication network can be modelled as an undirected graph: the nodes represent the processors and the links represent communication channels between them.
Studying rumour spreading has several applications to distributed computing in such networks, of which we mention just two (see~\cite{FPRU90} also).
The first is in broadcasting algorithms: a single processor wants to broadcast a piece of information to all other processors in the network.
The push\&pull protocol has several advantages over other protocols: it puts less load on the edges than the naive flooding protocol; it is simple and naturally distributed, since each node makes a simple local decision in each round; no knowledge of the global state or topology is needed; no internal states are maintained; it is scalable (the protocol is independent of the size of network and does not grow more complex as the network grows) and it is robust, in that the protocol tolerates random node/link failures without the need for error recovery mechanisms.

A second application comes from the maintenance of databases replicated at many sites, e.g., yellow pages, name servers, or server directories.
Updates to the database may be injected at various nodes, and these updates must propagate to all nodes in the network.
In each round, a processor communicates with a random neighbour and they share any new information, so that eventually all copies of the database converge to the same contents.
See~\cite{DGH+87} for details.

The above protocol assumes a synchronized computation and communication model, i.e.\ all nodes take action simultaneously at discrete time steps.
In many applications and certainly for modelling information diffusion in social networks, this assumption is not realistic.
In light of this, Boyd, Ghosh, Prabhakar, Shah~\cite{Boyd2006} proposed an asynchronous model with a continuous time line.
This too is a randomized distributed algorithm for spreading a rumour in a graph, defined below.
An \emph{exponential clock} with rate $\lambda$ is a clock that, once turned on, rings at times of a Poisson process with rate $\lambda$.  

\begin{definition}[Asynchronous push\&pull protocol]
  Given a graph $G$, independent exponential clocks of rate 1 are associated with the vertices of $G$, one to each vertex.
  Initially, one vertex $s$ of $G$ knows the rumour, and all clocks are turned on.
  Whenever the clock of a vertex $x$ rings, it calls a random neighbour $y$.
  If $x$ knows the rumour and $y$ does not, then $x$ tells $y$ the rumour (a push operation); if $x$ does not know the rumour and $y$ knows it, $y$ tells $x$ the rumour (a pull operation).
  The \textbf {asynchronous spread time} of $G$, denoted by $A(G,s)$, is the first time that everyone knows the rumour.
\end{definition}

Rumour spreading protocols in this model turn out to be closely related to Richardson's model for the spread of a disease~\cite{richardson,richardson_survey}.
For a single rumour, the push\&pull protocol is almost equivalent to the first passage percolation model introduced by Hammersley and Welsh~\cite{HW_FPP} with edges having independent exponential weights (see the
survey~\cite{fpp_survey}).
The difference between the push\&pull model and first passage percolation stems from the fact that in the rumour spreading models each vertex contacts one neighbour at a time, and so the rate at which $x$ pushes the rumour to $y$ is inversely proportional to the degree of $x$.
A rumour can also be pulled from $x$ to $y$.
This happens at rate determined by the degree of $y$.
On regular graphs, the asynchronous push\&pull protocol, Richardson's model, and first passage percolation are fundamentally equivalent, assuming appropriate parameters are chosen.
For general graphs, the push\&pull model is equivalent to first passage percolation with exponential edge weights that are independent, but have different means.
Hence, the degrees of vertices play a different role here than they do in Richardson's model or first passage percolation.
A collection of known bounds for the average spread times of many graph classes is given in~\cite[Table~1]{us}.

Doerr, Fouz, and Friedrich~\cite{experimental} experimentally compared the spread time in the two time models.
They state that ``Our experiments show that the asynchronous model is faster on all graph classes [considered here].''
The first general relationship between the spread times of the two variants was given in~\cite{us}, where it was proved using a coupling argument that
\[
  \frac{\e{S(G,s)}}{\e{A(G,s)}} = \widetilde{O} \left({n}^{2/3}\right).
\]
Here and below $\widetilde O$ (and $\widetilde\Omega$) allow for poly-logarithmic factors.
Building on the ideas of~\cite{us} and using more involved couplings, Giakkoupis, Nazari and Woelfel~\cite{nazari} improved this bound to $O\left({n}^{1/2}\right)$.
In this note we improve the bound to $\widetilde{O}(n^{1/3})$.
An explicit graph was given in~\cite{us} with
\[
  \frac{\e{S(G,s)}}{\e{A(G,s)}} = \widetilde{\Omega}
  \left({n}^{1/3}\right),
\]
known as the \emph{string of diamonds} (see Figure~\ref{fig:stringdiamonds}), which shows the exponent $1/3$ is optimal.

\begin{figure}
\centerline{\begin{tikzpicture}[radius=2pt]
\node at (-0.3,0){$v_0$};
\node at (2,-0.3){$v_1$};
\node at (4,-0.3){$v_2$};
\node at (6,-0.3){$v_3$};
\coordinate (v1) at (0,0) {}; \fill (v1) circle node {};
\coordinate (v2) at (1,1) {}; \fill (v2) circle node {};
\coordinate (v3) at (1,0.4) {}; \fill (v3) circle node {};
\coordinate (v4) at (1,-1) {}; \fill (v4) circle node {};
\coordinate (v5) at (1,-.4) {}; \fill (v5) circle node {};
\coordinate (v6) at (2,0) {}; \fill (v6) circle node {};
\coordinate (v8) at (3,1) {}; \fill (v8) circle node {};
\coordinate (v9) at (3,0.4) {}; \fill (v9) circle node {};
\coordinate (v10) at (3,-1) {}; \fill (v10) circle node {};
\coordinate (v12) at (3,-.4) {}; \fill (v12) circle node {};
\coordinate (v11) at (4,0) {}; \fill (v11) circle node {};
\coordinate (v18) at (4,0) {}; \fill (v18) circle node {};
\coordinate (v13) at (5,1) {}; \fill (v13) circle node {};
\coordinate (v14) at (5,0.4) {}; \fill (v14) circle node {};
\coordinate (v15) at (5,-1) {}; \fill (v15) circle node {};
\coordinate (v16) at (6,0) {}; \fill (v16) circle node {};
\coordinate (v17) at (5,-.5) {}; \fill (v17) circle node {};
\coordinate (v19) at (7,-0.8) {}; \fill (v19) circle node {};
\coordinate (v20) at (7,-0.4) {}; \fill (v20) circle node {};
\coordinate (v21) at (7,0) {}; \fill (v21) circle node {};
\coordinate (v22) at (7,0.4) {}; \fill (v22) circle node {};
\coordinate (v23) at (7,0.8) {}; \fill (v23) circle node {};
\foreach \from/\to in {v18/v13, v18/v14,v18/v15,v18/v17, v17/v16, v13/v16,v14/v16,v15/v16,v16/v19,v16/v20,v16/v21,v16/v22,v16/v23,
  v1/v2, v1/v3, v1/v4, v2/v6, v3/v6,v4/v6,v1/v5,v6/v5, v6/v8,v6/v9,v6/v10,v6/v12, v11/v8,v11/v9,v11/v10,v11/v12}
  \draw (\from) -- (\to);
\end{tikzpicture}}
\caption{The string of diamonds graph $\S_{3,4,5}$}
\label{fig:stringdiamonds}
\end{figure}
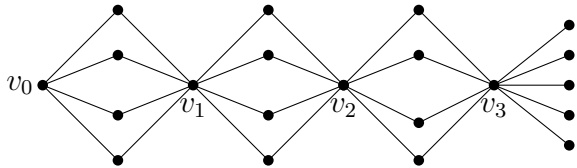

While we also a coupling argument, our argument is rather different from previous ones.
Our coupling is motivated by viewing rumour spreading as a special case of first passage percolation.
This novel approach involves carefully intertwined Poisson processes.
Our proof also yields a natural interpretation for the exponent $1/3$: using non-trivial counting arguments, we prove that the longest (discrete) distance that the rumour can traverse during a unit time interval in the asynchronous protocol is $O(n^{1/3})$ (see the proof of Lemma~\ref{P:banff}).
The string of diamonds shows that this is the best possible.

We shall make use of the following general bounds.
It is proved in~\cite{nazari} that $\e{A(G,s)}\leq \e{S(G,s)} + O(\log n)$.
Moreover, for all $G$ and $s$ (see~\cite[Theorem~1.3]{us}) we have
$$ \log n / 5 \leq \e{A(G,s)} \leq 4 n\:.$$
In this paper $n$ always denotes the number of vertices of the graph, and all logarithms are in natural base.

\section{Our results}

For an $n$-vertex graph $G$ and a starting vertex $s$, recall that $A(G,s)$ and $S(G,s)$ denote the asynchronous and synchronous spread times, respectively.  
Our main technical result is the following theorem (proved in Section~\ref{sec:proof}), which has several corollaries.

\begin{theorem}
  \label{mainthm}
  Given any $K>0$, there is a $C>0$ such that for any $(G,s)$ and any $t\geq1$ we have
  \[
    \Pr{S(G,s) > C(t + t^{2/3}n^{1/3}\log n)}
    \leq \P\Big[A(G,s)>t\Big] + Cn^{-K}.
  \]
\end{theorem}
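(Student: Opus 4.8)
The plan is to couple the asynchronous protocol on $(G,s)$ with the synchronous protocol by viewing both as instances of first passage percolation with exponential edge weights, and then to control how much faster information can travel in the asynchronous model on any short time interval. The key quantity to bound is the following: starting from the set $I_\tau$ of informed vertices at some time $\tau$ in the asynchronous process, how many \emph{additional} rounds of the synchronous process are needed to inform everything that the asynchronous process informs in the next unit of time? If during every unit time interval the asynchronous rumour advances at most a discrete graph-distance $d$ from the current informed set, and moreover the informed set grows by at least one vertex per unit interval (until everyone is informed), then crudely the synchronous process started from $I_\tau$ finishes within roughly $d$ times the remaining number of intervals — but we must do better than multiplying naively, which is exactly where the exponent $2/3$ rather than $1$ enters.

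First I would set up the coupling. Associate to each \emph{directed} edge $(x,y)$ an independent rate-$1/\deg(x)$ exponential clock; a push along $(x,y)$ happens when that clock rings and $x$ is informed. In the asynchronous model these are literally the push times (pulls are handled symmetrically, or one works with the "effective" edge weight $\min$ over the two orientations with appropriate rates). The informed-set process is then monotone in these clocks. For the synchronous side, one uses the standard fact (as in \cite{us,nazari}) that a synchronous round can be simulated by a burst of the same Poisson clocks over a time window of length $\Theta(\log n)$ with high probability: over a window of length $c\log n$, every edge incident to an informed vertex fires at least once except with probability $n^{-K}$ (union bound over at most $n^2$ edges and the chosen constant $c=c(K)$). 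Hence "advancing the synchronous front by one layer" costs time $O(\log n)$ in the coupled continuous clock, up to the $Cn^{-K}$ error we are allowed.

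Next, the combinatorial heart: bounding the asynchronous \emph{speed}. I would prove (this is the step flagged as Lemma~\ref{P:banff} in the excerpt) that with probability $1-n^{-K}$, for every unit-length time interval, the asynchronous rumour cannot jump graph-distance more than $O(n^{1/3})$ from the set informed at the start of the interval. The reason the exponent is $1/3$: for the rumour to traverse a path of $\ell$ new edges within one time unit, we need $\ell$ specific exponential clocks (the pushes/pulls along that path, whose rates are $1/\deg$ along the path) to all ring within the interval and in the right order; the probability of a \emph{specific} directed path of length $\ell$ being fully traversed in time $1$ is at most $\prod 1/(\deg_i \cdot (\text{something})) $ times $1/\ell!$ from the ordering, but the number of such paths can be as large as $\prod \deg_i$, so those cancel and we are left with roughly $\sum_{\text{lengths, start times}} 1/\ell!$-type sums; a more careful accounting — tracking that a path using high-degree vertices has correspondingly many competitors — gives that the expected number of length-$\ell$ traversals in a unit interval is at most (number of intervals)$\times n \times (C/\ell)^{\ell}$ or similar, which is $o(n^{-K})$ once $\ell \gg n^{1/3}\log n$... actually the balance comes from the fact that there are at most $n$ choices for the endpoint but up to $n$ vertices can each relay, and optimizing the trade-off between "how many disjoint diamond-like gadgets fit" and "how long each takes" yields the cube root. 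This counting argument is the main obstacle and the genuinely novel part; it must be done carefully, presumably by a first-moment bound over (starting vertex of the jump, length $\ell$, the actual path) and a clever charging that keeps the degree products under control.

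Finally I would assemble the pieces. Run the asynchronous process and condition on the good event (speed $\le O(n^{1/3})$ on every unit interval, and — a standard fact — at least one new vertex informed per unit interval until completion, so $A(G,s) = O(n)$ always and in particular the number of relevant unit intervals up to time $t$ is $O(t)$), all of which holds up to probability $1 - O(n^{-K})$. On this event: by time $t$ the asynchronous process has informed everything, and it did so by making at most $O(t)$ "jumps", each of graph-distance $O(n^{1/3})$. The synchronous process, started from $s$, needs to cover the same vertex set; chaining the window-simulation bound, covering a distance-$d$ jump costs $O(d\log n)$ synchronous-equivalent time, and we have $O(t)$ such jumps — but the crucial refinement is that jumps of total "effort" are constrained: the sum of jump lengths over all $O(t)$ intervals is at most the diameter of the informed region, which is itself $O(t \cdot n^{1/3})$ but can also be bounded by $n$; optimizing $\min(t \cdot n^{1/3}, \text{better bound})$ and Hölder/AM-GM across the $t$ intervals with total length budget gives that the synchronous completion time is $O\big(t + t^{2/3} n^{1/3}\big)\cdot O(\log n)$, which is exactly the claimed $C(t + t^{2/3}n^{1/3}\log n)$. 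Collecting the two $O(n^{-K})$ error terms (speed event and window-simulation event), possibly after adjusting $K$ and $C$, yields the stated inequality.
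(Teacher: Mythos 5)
Your high-level intuition (first passage percolation view, a path-length bound with exponent $1/3$, and a $\log n$ cost per edge on the synchronous side) matches the paper, but the proposal has two genuine gaps. First, the speed bound you state is the wrong one: you bound the advance per \emph{unit} interval by $O(n^{1/3})$ and then try to sum over $O(t)$ intervals, which can only give total traversed distance $O(t\,n^{1/3})$; the theorem needs the stronger global statement that in the whole interval $[0,t]$ no simple path of length $L\geq Ct^{2/3}n^{1/3}$ is traversed. This is strictly stronger than the concatenation of unit-interval bounds (try $t=\sqrt n$: your $\min(tn^{1/3},n)=n^{5/6}$ versus the needed $n^{2/3}$), and no Hölder/AM--GM step recovers it, because nothing in your setup constrains the sum of the per-interval jump lengths below $t\cdot n^{1/3}$. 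The paper proves the global version directly by a union bound over length-$L$ paths: the probability a fixed path is traversed in $[0,t]$ is $\le (2et/L)^L\prod_i \min(\deg\gamma_{i-1},\deg\gamma_i)^{-1}$, and the sum of these degree products over all simple paths is controlled by decomposing each path into degree-monotone segments around local minima of the degree sequence. Your heuristic ``the number of paths $\prod\deg_i$ cancels the rates'' is not quite the right cancellation (the per-edge rate is $1/\deg x+1/\deg y\approx 1/\min$, not a product one can cancel termwise), and you yourself flag this counting as unresolved; it is the technical core of the proof.

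Second, your synchronous-side accounting is incorrect as stated. Crossing a single edge $uv$ in the synchronous protocol takes a geometric number of rounds with mean $\approx\min(\deg u,\deg v)$, which can be polynomial in $n$; so ``covering a distance-$d$ jump costs $O(d\log n)$ rounds'' is false (two adjacent hubs of degree $n/2$ already take $\Theta(n)$ rounds). The correct statement, and the one the paper proves, is $T_{u,v}\le C\log n + C X_{u,v}$ where $X_{u,v}$ is an exponential of rate $1/\deg(u)$; the exponential parts are not absorbed into the $\log n$ term but are summed along the optimal short path and identified in distribution with the asynchronous traversal time, which is what ties the bound to $\Pr{A>t}$. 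Making this work also requires the $\{X_{u,v}\}$ to be mutually independent even though the round variables $\{T_{u,v}\}$ are not, which the paper achieves via an explicit two-clock ($Z,Z'$) construction; your proposal does not address this dependence at all, and your assembly (``run the asynchronous process\dots the synchronous process needs to cover the same vertex set'') implicitly treats the two protocols' randomness as shared without specifying the coupling that justifies it.
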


\begin{corollary}\label{cor1}
  For any $(G,s)$, we have $\e{S(G,s)} = O \left(\e{A(G,s)}^{2/3} n^{1/3} \log n\right)$.
\end{corollary}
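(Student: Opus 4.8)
The plan is to integrate the tail bound of Theorem~\ref{mainthm} against the distribution of $A(G,s)$. Write $a:=\e{A(G,s)}$ and recall $\log n/5\le a\le 4n$, so we may assume throughout that $n$ is large (for bounded $n$ both sides are $\Theta(1)$ and the bound holds for a suitable implied constant). Apply Theorem~\ref{mainthm} with $K=4$, obtaining a constant $C$, and put $\phi(t):=C\big(t+t^{2/3}n^{1/3}\log n\big)$, which is strictly increasing on $[1,\infty)$ with $\phi'(t)=C\big(1+\tfrac23 t^{-1/3}n^{1/3}\log n\big)$. Writing $\e{S(G,s)}=\int_0^\infty\Pr{S(G,s)>s}\,ds$, bounding the integrand by $1$ on $[0,\phi(1)]$ and substituting $s=\phi(t)$ on $[\phi(1),\infty)$, Theorem~\ref{mainthm} gives
\[
  \e{S(G,s)}\ \le\ \phi(1)+\int_1^\infty\Big(\Pr{A(G,s)>t}+Cn^{-K}\Big)\,\phi'(t)\,dt .
\]
Since $a\ge1$ we have $\phi(1)=C(1+n^{1/3}\log n)=O(a^{2/3}n^{1/3}\log n)$, so it remains to bound the integral.

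I would split $[1,\infty)$ at $a$ and at $n^3$. On $[1,a]$ simply bound $\Pr{S(G,s)>\phi(t)}\le1$, which contributes $\int_1^a\phi'(t)\,dt=\phi(a)-\phi(1)\le\phi(a)=C\big(a+a^{2/3}n^{1/3}\log n\big)=O(a^{2/3}n^{1/3}\log n)$, using $a=O(a^{2/3}n^{1/3})$ (which follows from $a\le 4n$). On $[a,n^3]$ use Markov's inequality $\Pr{A(G,s)>t}\le a/t$; the point is that $\int_a^{n^3}(a/t)\phi'(t)\,dt$ evaluates, after an elementary computation, to $Ca\ln(n^3/a)+2Ca^{2/3}n^{1/3}\log n\big(1-(a/n^3)^{1/3}\big)$, which is $O(a\log n+a^{2/3}n^{1/3}\log n)=O(a^{2/3}n^{1/3}\log n)$ — the key is that only a single $\log n$ factor survives. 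Over this same range the $Cn^{-K}$ term contributes at most $Cn^{-K}\,\phi(n^3)=O(n^{3-K})=o(1)$.

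The range $[n^3,\infty)$ is the one place where Theorem~\ref{mainthm} does not suffice: the additive error $Cn^{-K}$ does not decay in $t$, so it cannot be integrated to infinity. Here I would fall back on a crude bound, namely $\e{S(G,s)}\le n^2$ — in each round in which not everyone is yet informed, the number of informed vertices increases with probability at least $1/n$ — which together with the Markov property of the protocol gives $\Pr{S(G,s)>2n^2 j}\le 2^{-j}$ for all integers $j\ge0$, hence $\Pr{S(G,s)>s}\le 2^{\,1-s/2n^2}$ for all $s\ge0$. Since $\phi(t)\ge Ct$ and $\phi'(t)\le 2C$ for $t\ge n^3$, the contribution of this range is at most $\int_{n^3}^\infty 2^{\,1-Ct/2n^2}\cdot 2C\,dt=O\big(n^2\,2^{-Cn/2}\big)=o(1)$. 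Adding the three ranges yields $\e{S(G,s)}=O\big(a^{2/3}n^{1/3}\log n\big)$, which is the assertion.

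I expect the only genuine difficulty to be this last step: the error term of Theorem~\ref{mainthm} forces one to supplement it with an elementary polynomial bound on the synchronous spread time in order to control the regime $s\gtrsim\mathrm{poly}(n)$, and to take the exponent $K$ large enough (here $K=4$ works) that the accumulated $Cn^{-K}$ error over the polynomial range is negligible. Everything else is routine estimation of the integral, with the mild care needed to check that the split at $t=a$ produces $a^{2/3}n^{1/3}\log n$ rather than $a\,n^{1/3}\log n$ or an extra logarithmic factor.
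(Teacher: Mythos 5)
Your proof is correct, but it follows a genuinely different route from the paper's. The paper applies Theorem~\ref{mainthm} only once, at the single value $t=3\e{A(G,s)}$: Markov's inequality gives $\Pr{A(G,s)>t}\le 1/3$, so the theorem bounds the \emph{median} $M$ of $S(G,s)$ by $O(t^{2/3}n^{1/3}\log n)$ (the linear term $Ct$ being absorbed via $t=O(n)$), and a restart coupling --- reinitialize the protocol every $M$ rounds --- yields $\Pr{S(G,s)>iM}\le 2^{-i}$ and hence $\e{S(G,s)}\le 2M$. You instead integrate the tail bound over all $t$, splitting at $t=\e{A(G,s)}$ and $t=n^3$, applying Markov's inequality to $A$ pointwise on the middle range, and invoking a crude a priori bound $\e{S(G,s)}\le n^2$ (with its own restart step) to control the far tail, where the non-decaying additive error $Cn^{-K}$ of the theorem would otherwise accumulate; your computations there check out, including the key cancellation that leaves only a single $\log n$ factor after integrating $(a/t)\phi'(t)$. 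Both arguments rest on the same external ingredients --- the a priori bounds $\log n/5\le\e{A(G,s)}\le 4n$ and a sub-multiplicativity/restart step --- but the paper's single well-chosen $t$ avoids the three-range bookkeeping and works with $K=1$, while your integration is more mechanical and would convert any tail bound of this shape into a moment bound without having to guess the right $t$. Your identification of the regime $t\ge n^3$ as the one genuine obstruction is exactly the right diagnosis, and your treatment of it is sound.
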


\begin{proof}
  Apply Theorem~\ref{mainthm} with $K=1$ and $t=3\e{A(G,s)}\leq 12n$.
  By Markov's inequality, $\Pr{S(G,s) > C(t + t^{2/3}n^{1/3}\log n)} \leq 1/3 + C/n\leq1/2$ for $n$ large enough.
  Since $t=O(n)$, this implies the median of $S(G,s)$, denoted by $M$, is $O(t^{2/3}n^{1/3}\log n)$.
  To complete the proof we need only show that $\e{S(G,s)}=O(M)$.
  Consider the protocol which is the same as synchronous push\&pull, except that, if the rumour has not spread to all vertices by time $M$, then the process reinitializes.
  Clearly the spread time for this model is larger than the spread time for the synchronous model.
  Coupling the new process with push\&pull, we obtain for any $i\in\{0,1,2,\dots\}$ that $\Pr{S(G,s) > iM} \le 2^{-i}.$
  Thus,
\[
\e{S(G,s)}= 
\sum_{i=0}^{\infty}\Pr{S(G,s)>i}\leq
\sum_{i=0}^{\infty}
M \times \Pr{S(G,s)> iM}
\leq
M
\times \sum_{i=0}^{\infty} 2^{-i}
=2M.\quad \qedhere
\]
\end{proof}

Since for all $G$ and $s$, $\e{A(G,s)}=\Omega(\log n)$, we also obtain:

\begin{corollary}\label{cor2}
  For any $(G,s)$ we have
  $$\frac{\e{S(G,s)}}{\e{A(G,s)}}= O \left({n}^{1/3}{\log^{2/3} n}\right).$$
\end{corollary}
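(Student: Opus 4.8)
The plan is to derive Corollary~\ref{cor2} directly from Corollary~\ref{cor1} together with the universal lower bound $\e{A(G,s)} \geq \log n / 5$ quoted in the introduction (from~\cite[Theorem~1.3]{us}). By Corollary~\ref{cor1} there is an absolute constant $c$ with
\[
  \e{S(G,s)} \leq c\, \e{A(G,s)}^{2/3} n^{1/3} \log n ,
\]
and dividing both sides by $\e{A(G,s)}$ gives
\[
  \frac{\e{S(G,s)}}{\e{A(G,s)}} \leq c\, \e{A(G,s)}^{-1/3} n^{1/3} \log n .
\]

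First I would note that the right-hand side is a \emph{decreasing} function of $\e{A(G,s)}$, since the resulting exponent $2/3 - 1 = -1/3$ is negative; hence it is largest when $\e{A(G,s)}$ is smallest. Substituting the lower bound $\e{A(G,s)} \geq \log n / 5$ then yields
\[
  \frac{\e{S(G,s)}}{\e{A(G,s)}} \leq c\, (5/\log n)^{1/3}\, n^{1/3} \log n = O\!\left( n^{1/3} \log^{2/3} n \right),
\]
which is the claimed bound.

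There is essentially no obstacle here: the statement is a one-line consequence of Corollary~\ref{cor1} and the already-established fact $\e{A(G,s)} = \Omega(\log n)$. The only point deserving a moment's care is the direction of the monotonicity argument — one must feed in the \emph{lower} bound on $\e{A(G,s)}$ rather than an upper bound — and this is settled immediately by the sign of the exponent. All of the genuine difficulty lies upstream, in Theorem~\ref{mainthm} (and hence Corollary~\ref{cor1}), whose proof is carried out in Section~\ref{sec:proof}.
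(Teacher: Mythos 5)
Your derivation is correct and is exactly the paper's argument: the authors obtain Corollary~\ref{cor2} from Corollary~\ref{cor1} by dividing through by $\e{A(G,s)}$ and invoking the universal bound $\e{A(G,s)}=\Omega(\log n)$ to convert the factor $\e{A(G,s)}^{-1/3}\log n$ into $O(\log^{2/3} n)$. Nothing further is needed.
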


This corollary is tight up to an $O(\log n)$ factor: consider the following construction.

\begin{definition}[$\S_{m,k,l}$]
  Let $m\geq1$, $k\ge 2$ and $l\geq 0$ be integers. 
  The graph $\S_{m,k,l}$ is built as follows.
  Start with $m+1$ vertices $v_0,v_1,\dots,v_m$.
  For each $0\le i \le m-1$,
  add $k$ edge-disjoint paths of length 2 between $v_i$ and $v_{i+1}$.
  Finally, add $l$ new vertices and join them to $v_m$ (see Figure~\ref{fig:stringdiamonds} for an example).
  The graph $\S_{m,k,l}$ has $m(k+1)+l+1$ vertices and $2km+l$ edges.
  If $l=0$, this is called a `string of diamonds' in~\cite{us}.
\end{definition}

The spread times of this graph are given by the following lemma, whose proof can be found in Section~\ref{sec:lowerbounds}.

\begin{lemma}
  \label{lem:stringofdiamondscovertimes}
  We have $\e{S(\mathcal{S}_{m,k,l},v_0)} =\Theta(m)$ and $\e{A(\mathcal{S}_{m,k,l},v_0)} = \Theta (\log n + m/\sqrt k)$.
\end{lemma}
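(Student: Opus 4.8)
The plan is to analyze the two protocols separately on $\S_{m,k,l}$, using the near-path structure of the graph. Write $n = m(k+1)+l+1$, and note that the $l$ pendant vertices attached to $v_m$ contribute at most an additive $O(\log n)$ to either spread time once $v_m$ is informed (this is the standard star-graph fact, which follows e.g.\ from the general bound $\e{A}\le 4n$ applied to a star together with a coupon-collector lower bound, and from $\e{S}=\Theta(\log n)$ on a star). So up to additive $O(\log n)$ we may pretend $l=0$ and focus on the ``diamond chain'' $v_0,\dots,v_m$ with $k$ internally disjoint length-$2$ paths joining consecutive $v_i$'s. I would also record that the rumour always reaches $v_{i+1}$ only through one of the $k$ midpoints of the $i$-th diamond, so the progress of the rumour is essentially one-dimensional in $i$.

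\textbf{Synchronous protocol.} For the upper bound $\e{S}=O(m)$: I claim that from the moment $v_i$ knows the rumour, after $O(1)$ rounds in expectation $v_{i+1}$ knows it. Indeed, each midpoint $u$ of the $i$-th diamond has degree $2$; in each round $u$ pulls from $v_i$ with probability $1/2$, so within $O(\log k)$ rounds whp some — in fact most — of the $k$ midpoints are informed, and then $v_{i+1}$ (which in each round pulls from a uniformly random one of its $2k$ neighbours, at least $k$ of which are the informed midpoints once they are informed, and also at least one midpoint pushes to it with constant probability) learns the rumour within $O(1)$ further rounds in expectation. Summing the per-diamond costs and using that the overshoot variables are dominated by independent geometrics gives $\e{S}=O(m)$ (handling the $\log k$ terms: they telescope into $O(m)$ as well, or one observes the diamonds ``pipeline''). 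For the lower bound $\e{S}=\Omega(m)$: the rumour cannot pass from $v_i$ to $v_{i+1}$ in fewer than $2$ rounds because every $v_i$–$v_{i+1}$ path has length $2$; hence $S\ge 2m$ deterministically, giving $\e{S}\ge 2m$. Together with the $O(\log n)$ for the pendants this yields $\Theta(m)$ — here one should double-check that $m=\Omega(\log n)$ is not needed, and indeed it isn't, since $\Theta(m)$ already absorbs nothing smaller; but we do want $\e S = \Theta(m)$ as stated, and the $\log n$ from pendants is $O(m)$ only if $m=\Omega(\log n)$, so for the regime $m=o(\log n)$ one notes $n=\Theta(l)$ and $\e S=\Theta(\log l)=\Theta(\log n)$, which should be folded into the statement's implicit reading or the constant; I would state the per-diamond lemma cleanly and let the arithmetic follow.

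\textbf{Asynchronous protocol.} The lower bound $\e{A}=\Omega(\log n)$ is immediate from the general bound $\e{A(G,s)}\ge \log n/5$ quoted in the excerpt. The lower bound $\e{A}=\Omega(m/\sqrt k)$ and matching upper bound $O(m/\sqrt k)$ form the crux. The key computation is the time to cross one diamond: starting when $v_i$ is informed, how long until $v_{i+1}$ is informed? A midpoint $u$ becomes informed when either $v_i$'s clock rings and selects $u$ (rate $1/\deg(v_i)\approx 1/(2k)$ toward each specific $u$, but rate $\asymp 1$ total toward the $k$ relevant midpoints since $\deg(v_i)=2k$, wait: more carefully $v_i$ pushes to the $i$-th diamond's midpoints at total rate $k\cdot\frac{1}{2k}=\tfrac12$), or $u$'s own clock rings and selects $v_i$ (rate $1/2$). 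So each midpoint gets informed at rate $\Theta(1)$, but to inform $v_{i+1}$ we need $v_{i+1}$'s clock to ring and select an \emph{already-informed} midpoint, or an informed midpoint's clock to ring and select $v_{i+1}$. The bottleneck: when only $j$ of the $k$ midpoints are informed, the rate at which $v_{i+1}$ learns the rumour is $\Theta(j/k)$ (from $v_{i+1}$ pulling) plus $\Theta(j)$ is wrong — an informed midpoint $u$ pushes to $v_{i+1}$ at rate $1/2$, so $j$ informed midpoints push at rate $j/2$; combined with $v_{i+1}$ pulling at rate $\le 1$. Hmm, that would give crossing time $O(1)$, not $m/\sqrt k$. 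The real point is subtler and is exactly the ``$\sqrt k$'' phenomenon: \emph{both endpoints}, not $v_{i+1}$ being informed while midpoints are, is needed; rather, I recall that the relevant event is that the rumour must ``commit'' to one midpoint. Let me reframe: the correct crossing-time analysis (as in~\cite{us}) shows $\E[\text{time to cross diamond }i]=\Theta(1/\sqrt k)$, because the first midpoint to be informed and then relay onward — the product of (time $\Theta(1/k)$? no) — I would derive it as follows: the time for the first of the $k$ midpoints to both (a) get informed from $v_i$ and (b) inform $v_{i+1}$ is the minimum over $k$ i.i.d.\ copies of a sum of two rate-$\Theta(1)$ exponentials, but that minimum is $\Theta(1/k)$, still not $1/\sqrt k$. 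The genuine source of $1/\sqrt k$: $v_{i+1}$ has degree $2k$, so when it pulls it hits a specific informed midpoint only at rate $\Theta(1/k)$; and pushes from midpoints to $v_{i+1}$ — each informed midpoint pushes to $v_{i+1}$ at rate $\tfrac12$ but $v_{i+1}$ is only "listening" ... no, push always succeeds. I will therefore follow~\cite{us}'s calculation directly: the bottleneck is that a midpoint $u$ is informed (rate $\asymp 1$ to get informed), but $v_{i+1}$ gets informed from the \emph{pool}; careful bookkeeping (which I will reproduce from~\cite[proof of the string-of-diamonds bound]{us}) gives that the crossing time of diamond $i$ is stochastically $\Theta(1/\sqrt k)$, and these $m$ crossing times are independent, so $A=\sum_{i}(\text{crossing}_i)$ concentrates around $\Theta(m/\sqrt k)$ by a second-moment / Chernoff argument for sums of independent subexponential variables, giving $\e A=\Theta(m/\sqrt k)$; adding the $\Theta(\log n)$ from the general lower bound and the pendant star yields $\Theta(\log n + m/\sqrt k)$.

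\textbf{Main obstacle.} The hard part is the tight crossing-time estimate $\Theta(1/\sqrt k)$ for a single diamond in the asynchronous model and justifying that the $m$ crossings compose additively with enough independence to get concentration (hence control of the \emph{expectation}, not just a high-probability bound); this is where one must be careful that the rumour, once it reaches $v_{i+1}$, does not get "help" flowing backward from diamond $i+1$ in a way that speeds things up, and that partially-informed midpoint pools at the moment $v_i$ is informed only help by a constant factor. I would isolate the single-diamond bound as a sublemma, prove the upper bound by a union/coupling with $k$ independent two-exponential relays and the lower bound by showing that with constant probability the crossing takes $\Omega(1/\sqrt k)$ (via computing the rate at which $v_{i+1}$ gets informed given the informed-midpoint count, whose expected integral to reach $v_{i+1}$ is $\Theta(1/\sqrt k)$), then assemble via standard concentration for sums of independent nonnegative random variables with exponential tails.
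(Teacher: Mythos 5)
There is a genuine gap at the heart of your asynchronous analysis: you never establish the single-diamond crossing time $\Theta(1/\sqrt k)$, and in the one place where you try to compute it you get it wrong. You write that ``the minimum over $k$ i.i.d.\ copies of a sum of two rate-$\Theta(1)$ exponentials \dots is $\Theta(1/k)$'' and conclude that this route cannot produce the $\sqrt k$; in fact that minimum is exactly where the $\sqrt k$ comes from, and it is the paper's argument. If $Z_j$ is a sum of two independent exponentials of rate $1/2$, then $\Pr{Z_j\le t}=1-e^{-t/2}-(t/2)e^{-t/2}=\Theta(t^2)$ for small $t$, so $\Pr{\min_j Z_j>t}=\left(1+t/2\right)^k e^{-kt/2}$ and $\E[\min_j Z_j]=\Theta(1/\sqrt k)$ (the paper evaluates this integral via a Gamma-function substitution). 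Your eventual plan is to ``reproduce the bookkeeping from \cite{us}'', which is precisely the step a proof must supply. For the matching lower bound the paper's argument is also more concrete than your sketch: for $v_{i+1}$ to be informed within time $t$ of $v_i$, either one of the two endpoints' clocks must ring or some midpoint's clock must ring at least twice, giving $\Pr{Y_i\le t}\le 2t+kt^2/2$ and hence $\E[Y_i]=\Omega(1/\sqrt k)$ at $t=1/(3\sqrt k)$; your ``expected integral of the informed-midpoint rate'' idea is not carried out. These $m$ crossings then add by linearity of expectation and the cut-vertex structure --- no concentration argument is needed for a statement about expectations.

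A secondary error: you assert $\e{S}=\Theta(\log n)$ for the star, and this leads you to doubt (and effectively contradict) the lemma in the regime $m=o(\log n)$. In the synchronous push\&pull protocol every pendant vertex calls its unique neighbour $v_m$ in the round after $v_m$ is informed and pulls the rumour, so the pendants cost one round, not $\Theta(\log n)$; the star's synchronous spread time is $\Theta(1)$ (it is the \emph{asynchronous} star that costs $\Theta(\log n)$). With this fixed, the paper's clean bound $2m\le\e{S}\le 4m+1$ follows by tracking a single midpoint per diamond (two geometric waits of mean $2$ each), avoiding your $\log k$/pipelining detour.
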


If we let $m=\Theta(n^{1/3}(\log n)^{2/3})$ and
$k = \Theta((n/\log n)^{2/3})$
such that $km+m+1=n$, we obtain a graph $\mathcal{S}_{m,k,0}$ with
$$\frac{\e{S(\mathcal{S}_{m,k,0},v_0)}}{\e{A(\mathcal{S}_{m,k,0},v_0)}}= \Omega \left({n}/{\log n}\right)^{1/3},$$
which means Corollary~\ref{cor2} is tight up to an $O(\log n)$
factor.

It turns out that using our results and the above construction, we can get a more refined picture of what values the pair $(A(G,s),S(G,s))$ can take.
More precisely, for $\alpha, \beta$, we say the pair of exponents $(\alpha,\beta)$ is \textbf{attainable} if there exist infinitely many graphs $(G,s)$ for which $\e{A(G,s)}=\widetilde{\Theta}(n^{\alpha})$ and $\e{S(G,s)}=\widetilde{\Theta}(n^\beta)$.
One may wonder which pairs $(\alpha,\beta)$ are attainable?
The following theorem answers this question.

\begin{theorem}\label{thm:attainable}
  The pair $(\alpha,\beta)$ is attainable if and only if $0\leq\alpha \leq 1$ and $\alpha \leq \beta \leq \frac13 + \frac23 \alpha$.
\end{theorem}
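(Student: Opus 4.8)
The plan is to prove the two directions separately. Necessity of the four inequalities will follow essentially for free from the general bounds recorded in the introduction, while sufficiency will be an explicit construction using the graphs $\S_{m,k,l}$, whose spread times are pinned down by Lemma~\ref{lem:stringofdiamondscovertimes}.

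\emph{Necessity.} Suppose $(\alpha,\beta)$ is attainable and fix a witnessing infinite family of pairs $(G,s)$ with $\e{A(G,s)}=\widetilde\Theta(n^\alpha)$ and $\e{S(G,s)}=\widetilde\Theta(n^\beta)$. Since $\e{A(G,s)}\ge \log n/5\to\infty$ we cannot have $\alpha<0$, and since $\e{A(G,s)}\le 4n$ we cannot have $\alpha>1$; thus $0\le\alpha\le1$. Since $\e{S(G,s)}\ge1$ we cannot have $\beta<0$. The bound $\e{A(G,s)}\le \e{S(G,s)}+O(\log n)$ of~\cite{nazari} forces $n^\alpha=\widetilde O(n^\beta+\log n)=\widetilde O(n^\beta)$ (using $\beta\ge0$), hence $\alpha\le\beta$. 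Finally Corollary~\ref{cor1} gives $n^\beta=\widetilde O\big(n^{2\alpha/3}\,n^{1/3}\big)$, hence $\beta\le\frac13+\frac23\alpha$.

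\emph{Sufficiency.} Fix $\alpha,\beta$ with $0\le\alpha\le1$ and $\alpha\le\beta\le\frac13+\frac23\alpha$. The guiding heuristic is that $\S_{m,k,l}$ has $n=\Theta(mk+l)$ vertices with $\e{S}=\Theta(m)$ and $\e{A}=\Theta(\log n+m/\sqrt k)$, so I would take $m\approx n^\beta$, $k\approx n^{2(\beta-\alpha)}$, and place the remaining $\approx n$ vertices as leaves $l$; the requirements $k\ge2$ and $l\ge0$ then translate precisely into $\beta\ge\alpha$ and $mk\lesssim n$, i.e.\ $3\beta-2\alpha\le1$. Concretely: if $\beta=0$ (which forces $\alpha=0$) take the family $\S_{1,n-2,0}$ for $n=4,5,6,\dots$, for which $\e S=\Theta(1)$ and $\e A=\Theta(\log n)$, so both are $\widetilde\Theta(n^0)$. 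If $\beta>0$, let $m$ run over the positive integers and set
\[
k=\max\{2,\ \lfloor m^{2(\beta-\alpha)/\beta}\rfloor\},\qquad n=\lceil 4\,m^{1/\beta}\rceil,\qquad l=n-m(k+1)-1 .
\]
One checks $m(k+1)+1\le m^{(3\beta-2\alpha)/\beta}+m+1\le 3m^{1/\beta}\le n$, where the middle step uses $3\beta-2\alpha\le1$ and $\beta\le1$; hence $l\ge0$ and $\S_{m,k,l}$ is well defined with $n=\Theta(m^{1/\beta})$ vertices. Since $m=\Theta(n^\beta)$ and (whether or not $\alpha=\beta$) $m/\sqrt k=\Theta(m^{\alpha/\beta})=\Theta(n^\alpha)$, Lemma~\ref{lem:stringofdiamondscovertimes} yields $\e{S(\S_{m,k,l},v_0)}=\Theta(m)=\widetilde\Theta(n^\beta)$ and $\e{A(\S_{m,k,l},v_0)}=\Theta(\log n+n^\alpha)=\widetilde\Theta(n^\alpha)$. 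Letting $m\to\infty$ produces infinitely many such graphs.

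\emph{Where the difficulty lies.} The numerics above are routine; the one inequality that genuinely uses a hypothesis of the theorem is $l\ge0$, which holds exactly because $\beta\le\frac13+\frac23\alpha$. The points that need care are the degenerate and boundary regimes: $\beta=0$ (handled by a separate trivial family), $\alpha=\beta$ (where the exponent $2(\beta-\alpha)/\beta$ vanishes and one must default to $k=2$), and the face $3\beta-2\alpha=1$ (where $mk=\Theta(n)$ so the vertex budget is tight, which is why a comfortable constant such as $4$ is built into $n$); in each case one must also confirm that the $\log n$ term in $\e A$ is negligible when $\alpha>0$ and is the dominant term when $\alpha=0$, so that $\e A=\widetilde\Theta(n^\alpha)$ throughout.
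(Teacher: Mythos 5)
Your proposal is correct and follows essentially the same route as the paper: necessity from the general bounds $\log n/5\le \e{A}\le 4n$, $\e{A}\le\e{S}+O(\log n)$, and Corollary~\ref{cor1}; sufficiency via $\S_{m,k,l}$ with $m\approx n^\beta$, $k\approx n^{2(\beta-\alpha)}$ and the leftover vertices as leaves, with a separate trivial family for $\beta=0$ (the paper uses the star where you use $\S_{1,n-2,0}$, an immaterial difference). The only nit is that your displayed chain $m(k+1)+1\le m^{(3\beta-2\alpha)/\beta}+m+1$ is off by a constant when the $\max$ forces $k=2$ (e.g.\ $\alpha=\beta$), but your choice $n=\lceil 4m^{1/\beta}\rceil$ absorbs this, as you note.
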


\begin{proof}
  The necessity of $ 0\leq \alpha \leq \beta\leq 1$ follows from results in \cite{us} mentioned above.
  Corollary~\ref{cor1} gives $\beta \leq \frac 13 + \frac23\alpha$.

  To see that these conditions are sufficient, assume $(\alpha,\beta)$ satisfy $0\leq\alpha \leq 1$ and $\alpha \leq \beta \leq \frac13 + \frac23 \alpha$. 
  If $\beta>0$, let $m=[n^\beta/2]$, $k=[n^{2\beta-2\alpha}]$, and $l=n-1-m(k+1)$ so that $l\ge0$ for $n$ large enough.
  Lemma~\ref{lem:stringofdiamondscovertimes} gives $\e{S(\S_{m,k,l},v_0)}=\Theta(m)=\Theta(n^\beta)$ and $\e{A(\S_{m,k,l},v_0)}=\Theta(\log n + m/\sqrt k)=\Theta(\log n + n^\alpha)=\widetilde{\Theta}(n^\alpha).$
  If $\beta=0$, then $\alpha=0$; In this case the star graph on $n$ vertices  has $\e{S(G,s)}=\Theta(1)$ and $\e{A(G,s)}=\Theta(\log n)=\widetilde{\Theta}(1)$ for any vertex $s$, as required (this is because the maximum of $n$ independent exponential random variables of mean $1$ is $\log n$, see~\cite[Section~2.2]{us} for details).
\end{proof}

\section{Proof of Theorem~\ref{mainthm}}
\label{sec:proof}

In this section we fix the graph $G$ and the starting vertex $s$.
We first introduce several notations.
For any vertex $v\in G$, let $\Gamma({s,v})$ be the set of all simple paths in $G$ from $s$ to $v$.
For a path $\gamma$, let $E(\gamma)$ be its set of edges and $|\gamma| \coloneqq |E(\gamma)|$ denote its length.
Let $\deg(u)$ denote the degree of a vertex $u$.

For any ordered pair $(u,v)$ of adjacent vertices, let $Y_{u,v}$ be an exponential random variable with rate $1/\deg(u)$, so that these random variables are all independent.
In the asynchronous protocol, since each vertex $u$ calls any adjacent $v$ at a rate of $1/\deg(u)$, we can write:
\begin{equation}
  \label{asynchronous_max_min_form}
  A \coloneqq A(G,s) = \max_{v\in V}
  \min_{\gamma \in \Gamma({s,v})}
  \sum_{xy\in E(\Gamma)}
  \min \{ Y_{x,y},Y_{y,x} \}.
\end{equation}
To see this, simply interpret $Y_{x,y}$ is the time it takes after one of $x,y$ learns the rumour before $x$ calls $y$.

For any positive integer $L$, we introduce the restriction to short paths
\[
  A_L \coloneqq \max_{v\in V}
  \min_{\substack{\gamma \in \Gamma({s,v})\\ |\gamma|\leq L}}
  \sum_{xy\in E(\Gamma)}
  \min \{ Y_{x,y},Y_{y,x} \}.
\]
For any $L$ we trivially have $A_L \geq A$.
To bound $A$ from below, we have the following result giving stochastic domination ``with high probability''.

\begin{lemma}\label{P:banff}
  There exists a $C_0$ such that for any $C>C_0$, $t\geq1$ and $L \geq Ct^{2/3}n^{1/3}$ we have
  \[
    \Pr{A_L>t} \leq \Pr{A>t} + e^{-L}.
  \]
\end{lemma}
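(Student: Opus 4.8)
The plan is to expose the asynchronous protocol through the exponential variables $Y_{x,y}$, reduce the claim to the statement that long \emph{cheap} paths from $s$ are unlikely, and then prove the latter by a union bound over paths combined with a Chernoff estimate and a careful path count.

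\textbf{Step 1: reduction.} Write $w(\gamma) := \sum_{xy\in E(\gamma)}\min\{Y_{x,y},Y_{y,x}\}$, so that by \eqref{asynchronous_max_min_form} we have $A=\max_{v}\min_{\gamma\in\Gamma(s,v)}w(\gamma)$, and $A_L$ is the same with the inner minimum restricted to $|\gamma|\le L$. If $A_L>t$ and $A\le t$, choose $v$ with $\min_{|\gamma|\le L}w(\gamma)>t$; since $A\le t$ there is a minimum-weight (hence simple) path $\gamma^\star$ from $s$ to $v$ with $w(\gamma^\star)\le t$, and it must satisfy $|\gamma^\star|>L$. Therefore
\[
\Pr{A_L>t}\le\Pr{A>t}+\Pr{\exists\text{ simple }\gamma\text{ from }s:\ |\gamma|>L,\ w(\gamma)\le t}.
\]
Since simple paths have length at most $n-1$ and $\sum_{\ell>L}e^{-\ell}<e^{-L}$, it suffices to show that for every integer $\ell\ge L\ge C_0t^{2/3}n^{1/3}$ (with $\ell\le n-1$, otherwise the statement is vacuous),
\[
\sum_{\gamma\text{ simple from }s,\ |\gamma|=\ell}\Pr{w(\gamma)\le t}\ \le\ e^{-\ell}.
\]

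\textbf{Step 2: a single path.} Fix $\gamma=(v_0=s,\dots,v_\ell)$. The increments $W_i:=\min\{Y_{v_{i-1},v_i},Y_{v_i,v_{i-1}}\}$ are independent exponentials of rate $r_i:=1/\deg(v_{i-1})+1/\deg(v_i)$. A Chernoff bound for the lower tail gives, for any $\theta>0$,
\[
\Pr{w(\gamma)\le t}\ \le\ e^{\theta t}\prod_{i=1}^{\ell}\e{e^{-\theta W_i}}\ =\ e^{\theta t}\prod_{i=1}^{\ell}\frac{r_i}{r_i+\theta}\ \le\ e^{\theta t}\prod_{i=1}^{\ell}\min\{1,\ 2/(\theta D_i)\},
\]
where $D_i:=\min\{\deg(v_{i-1}),\deg(v_i)\}$ and we used $r_i\le 2/D_i$. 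We also record that $\e{W_i}=1/r_i\ge D_i/2$, so an edge of $\gamma$ joining two vertices of large degree is expensive on average.

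\textbf{Step 3: counting cheap paths, and why the exponent is $1/3$.} This is the crux. Fix a degree threshold $d$; call a vertex \emph{high} if its degree exceeds $d$ and \emph{low} otherwise, and call an edge of $\gamma$ \emph{heavy} if both endpoints are high. The structure to exploit is: (i) from a low vertex the path has at most $d$ continuations, whereas the high vertices visited by a simple $\gamma$ are distinct and globally scarce (using $\sum_v\deg(v)\le n^2$, refined by the fact that the low-degree neighbours through which $\gamma$ leaves its high vertices are also distinct); and (ii) the high vertices of $\gamma$ can only be adjacent to one another along heavy edges, so if $\gamma$ has $b$ heavy edges it has at most $(\ell+b)/2+1$ high vertices — hence with few heavy edges the high vertices are separated by low ones, which drastically limits the count, while with many heavy edges the factors $\min\{1,2/(\theta D_i)\}$ and the bound $\e{W_i}\ge D_i/2>d/2$ on heavy edges force $\Pr{w(\gamma)\le t}$ to be tiny. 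Balancing the parameters (taking $\theta$ of order $\ell/t$ and, in the extremal regime, $d$ a large constant) bounds the number of cheap simple paths of length $\ell$ by $(O(n)/\ell)^{\ell/2}$ and, with $\Pr{w(\gamma)\le t}\le (O(t)/\ell)^{\ell}$ from the $\ell$ edges each of mean $\ge 1$, gives
\[
\sum_{|\gamma|=\ell}\Pr{w(\gamma)\le t}\ \le\ \bigl((O(n)/\ell)^{1/2}\cdot O(t)/\ell\bigr)^{\ell},
\]
which is at most $e^{-\ell}$ once $\ell^{3}\gtrsim nt^{2}$, i.e.\ $\ell\ge C_0t^{2/3}n^{1/3}$.

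\textbf{The main obstacle} is precisely this counting: the plain union bound over all simple paths of a given length, and any argument that relaxes simple paths to walks, is hopelessly lossy (a walk may bounce off the many leaves at a high-degree vertex, which a simple path cannot), so simplicity, the global degree budget, and the weight carried by heavy edges must be used together and in balance. That the resulting exponent $1/3$ cannot be improved is witnessed by the string of diamonds $\S_{m,k,0}$: it has $\approx k^{m}$ simple paths of length $2m$ from $v_0$, each of expected weight $\Theta(m/\sqrt k)$, and with $n\asymp km$, $t\asymp m/\sqrt k$, $\ell=2m$ the relation $\ell^{3}\asymp nt^{2}$ is exactly saturated.
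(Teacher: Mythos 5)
Your Steps 1 and 2 are sound and essentially match the paper's setup: the reduction to bounding the probability that some long simple path from $s$ has weight at most $t$, and the per-path estimate $\Pr{w(\gamma)\le t}\le e^{\theta t}\prod_i\min\{1,2/(\theta D_i)\}$ (the paper obtains the equivalent bound $\frac{t^\ell}{\ell!}\prod_i r_i$ from the volume of the simplex $\{t_1<\dots<t_\ell\le t\}$ rather than from a Chernoff bound). The gap is in Step 3, which is the actual content of the lemma and where your argument is a sketch rather than a proof. Two concrete problems. First, the ingredients you list do not yield the asserted count of $(O(n)/\ell)^{\ell/2}$ cheap paths: a high-degree vertex $v$ offers $\deg(v)$ continuations, yet the edge to the next (low-degree) vertex contributes only a factor $1/\min(\deg)\ge 1/d$ to the probability, so the large degree is not compensated locally; and your global fix --- that the exit neighbours of the high vertices are distinct --- only bounds the choices by roughly $\binom{2|E|}{\ell/2}\le (O(n^2)/\ell)^{\ell/2}$, which overshoots the target by a factor of $n^{\ell/2}$. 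Second, a single threshold $d$ (``a large constant'' in your words) cannot handle paths whose degrees span many scales, say $n^{0.1},n^{0.2},\dots$; the trade-off between the number of continuations and the edge rates has to be carried out at every degree scale simultaneously, and nothing in your sketch does this.

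The paper resolves exactly this difficulty by bounding the weighted count $\sum_{|\gamma|=L}Q(\gamma)$ directly, where $Q(\gamma)=\prod_i 1/\min(\deg(\gamma_{i-1}),\deg(\gamma_i))$. It cuts each path into \emph{segments} on which the degree sequence decreases to a local minimum and then increases, and observes that the sum of $Q$ over all segments with a prescribed local-minimum vertex $x$ and prescribed arm lengths is at most $1$: summing outward from $x$, the factor $1/\deg$ attached to each vertex exactly cancels the number of choices for the next vertex (a comparison to walks that is only available when one sums from the low-degree end of a monotone stretch --- this is the step that replaces your high/low dichotomy and works at all degree scales at once). Since each segment costs a factor $n$ (the choice of its centre) and all but two segments have at least two edges, this gives $\sum_\gamma Q(\gamma)\le (O(n)/L)^{L/2+1}$, i.e.\ the amortised $(n/L)^{1/2}$ per edge that your Step 3 asserts but does not establish, and hence the exponent $1/3$. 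Your identification of the string of diamonds as the extremal configuration is correct, but to complete the proof you would need to supply a counting argument of comparable strength to this segment decomposition.
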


\begin{proof}
  We show that, in the asynchronous protocol, with probability $1-e^{-L}$, during the interval $[0,t]$, the rumour does not travel along any simple path of length $L$.
  This automatically implies the rumour also does not travel along any longer path either.
  We prove this by taking a union bound over all paths of length $L$.
  As there is no simple path of length $n$ or more, we may assume $L<n$.
  
  Consider a path $\gamma$ with vertices $\gamma_0,\gamma_1,\dots,\gamma_L$.
  In order for the rumour to travel along $\gamma$, it is necessary that calls are made along the edges of $\gamma$ in the order given by $\gamma$, at some sequence of times $0 \leq t_1 < \dots < t_L \leq t$.  
  Since along each edge the rumour can travel via a push or a pull, the rate of calls along an edge $xy$ is $1/\deg(x) + 1/\deg(y)$.
  Since the volume of the $L$-dimensional simplex of possible sequences $(t_i)$ is $t^L/L!$, the probability of such a sequence of calls along the path $\gamma$ is at most
  \begin{equation}\label{eq0}
    \frac{t^L}{L!} \prod_{i=1}^L \bigg( \frac{1}{\deg(\gamma_{i-1})} +
    \frac{1}{\deg(\gamma_i)} \bigg)
    \leq 
    \left(\frac{2et}{L}\right)^L Q(\gamma),
    %\prod_{i=1}^L \frac{1}{\min(\deg(\gamma_{i-1}), \deg(\gamma_i))}.
  \end{equation}
  where we define
  \[
    Q(\gamma) \coloneqq \prod_{i=1}^{|\gamma|}
    \frac{1}{\min(\deg(\gamma_{i-1}), \deg(\gamma_i))}.
  \]
  Our objective is therefore a bound for $\sum_{|\gamma|=L} Q(\gamma)$.

  For a path $\gamma$ of length $L$, consider the sequence of degrees $(\deg(\gamma_i))_{i=0}^{L}$.
  We say the sequence has a \emph{local minimum} at $i$ if $\deg(\gamma_{i-1}) > \deg(\gamma_i) \leq \deg(\gamma_{i+1})$, and a \emph{local maximum} at $i$ if $\deg(\gamma_{i-1}) \leq \deg(\gamma_i) > \deg(\gamma_{i+1})$.
  In both of these definitions we use the convention that inequalities  involving $\gamma_{-1}$ or $\gamma_{L+1}$ always hold.
  The edge set of $\gamma$ can be partitioned into \emph{segments} starting and ending at local maxima.
  For example, suppose $L=7$ and the degree sequence is
  \[
    (\deg(\gamma_0), \dots, \deg(\gamma_7)) = (\boldsymbol 5,5,7, \boldsymbol 3,4,4, \boldsymbol 2,5).
  \]
  The local minima are shown in bold.
  Then the segments are $(\boldsymbol{\pmb \gamma_0},\gamma_1,\gamma_2)$, $(\gamma_2,\boldsymbol{\pmb \gamma_3},\gamma_4,\gamma_5)$, and $(\gamma_5,\boldsymbol{\pmb\gamma_6},\gamma_7)$.
  Thus, in each segment the degrees strictly decrease to a local minimum (again, in bold), then weakly increase up to the local maximum at the end of the segment.
  (The first and last segments are special in that the local minimum could be   at the beginning and end of the segment, respectively.)
  Henceforth, we use the term \emph{segment} for a path with degrees having this property.  

  Each path gives rise to an ordered sequence of segments.
  Denote the segments of $\gamma$ by $\sigma_1,\dots,\sigma_s$, and note that $s\leq L/2+1$, since each segment (except possibly the first and the last ones) contains at least two edges.
  The next observation is that we have $Q(\gamma) = \prod Q(\sigma_i)$; that is, the $Q$ value of a path equals the product of $Q$ values of its segments (this is true for any  partition of a path into sub-paths).
  Note also that not every sequence of segments can arise in this way: each segment must start at the last vertex of the previous segment.
  Since we are interested only in simple paths, the segments are otherwise disjoint.
  Thus for a collection of segments there is at most one order in which it could arise.
  Therefore,
  \begin{equation}\label{eq1}
    \sum_{|\gamma|=L} Q(\gamma) \leq 
    \sum_{s=1}^{L/2+1}  
    \sum_{|\sigma_1|+\dots+|\sigma_s|=L}\ \frac{1}{s!} \prod_{i=1}^{s} Q(\sigma_i) \:,
  \end{equation}
  where the second sum is over ordered $s$-tuples of segments whose lengths add up to $L$, but \emph{without} the condition that they form a path (that is why we have an inequality rather than an equality).
  The $1/s!$ factor comes from the aforementioned fact that each at most one order of each $s$-tuple needs to be counted.

  We now bound the right-hand-side of~(\ref{eq1}).
  We say a segment has \emph{type} $(x,\ell^-,\ell^+)\in V(G)\times \mathbb{Z}\times\mathbb{Z}$ if the local minimum is at a vertex $x$ (called the \emph{center} of the segment), and the segment has $\ell^-$ edges before $x$ and $\ell^+$ edges after $x$.
  (The example path above had $s=3$ segments, of types $(\gamma_0,0,2)$, $(\gamma_3,1,2)$, and $(\gamma_6,1,1)$ respectively.)
  For a segment $\sigma$, let $\pi(\sigma)$ denote its type, and let $\mathcal T$ denote the set of all possible types.

  For bounding the right-hand-side of~(\ref{eq1}), we first fix $s$ and bound the number of options for the sequence $(\pi(\sigma_1),\dots,\pi(\sigma_s))$.
  There are $n^s$ choices for the centres, and at most $2^L$ choices for the lengths $\ell^\pm$ (the number of ways to write $L$ as an ordered sum of natural
  numbers).
  Thus there are at most $ 2^L n^s$ options for $(\pi(\sigma_1),\dots,\pi(\sigma_s))$.
  Enumerate these $s$-vectors of types by
  ${T}_1,\dots,{T}_m \in \mathcal{T}^s$ with $m\leq  2^L n^s$, and let $T_{j,k}$ denote the $k$th component of $T_j$, i.e.\ the type specified for $\sigma_k$ in $T_j$.
  Thus,
  \begin{align}
   \sum_{|\sigma_1|+\dots+|\sigma_s|=L}\  \prod_{i=1}^{s} Q(\sigma_i) & =
   \sum_{j=1}^m\ 
   \sum_{(\pi(\sigma_1),\dots,\pi(\sigma_s))
   = T_j} \ \prod_{i=1}^{s}  Q (\sigma_i)\notag\\\notag
   & \leq \sum_{j=1}^m \ 
   \prod_{k=1}^{s}  \left( \sum_{\pi(\sigma_k) =T_{j,k}} Q(\sigma_k)\right).
  \end{align}
  
  Next, we claim that each term in the last product, which is the sum of $Q$ values of segments of a given type can be bounded by 1.
  Fix some type $(x,\ell^-,\ell^+)$, and let $\ell=\ell^-+\ell^+$.
  The constraints on the degrees along a segment $\sigma=v_0,v_1,\cdots,v_{\ell^-},\cdots, v_{\ell}$ of this type imply $x=v_{\ell^-}$ and 
  \[
    Q(\sigma) =
    \prod_{i=1}^{\ell^-} \frac{1}{\deg(v_i)}
    \prod_{i=\ell^-}^{\ell-1} \frac{1}{\deg(v_i)}.
  \]
  If we sum this over all \emph{walks} of length $\ell^-+\ell^+$ whose $\ell^-$th vertex is $x$, but waiving the degree monotonicity constraint, then we get 1 (since the number of choices for the neighbours cancel out the degree reciprocals).
  Restricting to simple paths with piecewise monotone degrees only decreases this.
  Thus we obtain
  \[
    \sum_{|\sigma_1|+\dots+|\sigma_s|=L}\  \prod_{i=1}^{s} Q(\sigma_i)\leq m \times 1 \leq 2^L n^s.
  \]
  Plugging this back into~(\ref{eq1}) yields
  \[
    \sum_{|\gamma|=L} Q(\gamma) \leq
     \sum_{s=1}^{L/2+1} 2^L n^s/s!
    \leq \left(\frac{8en}{L}\right)^{L/2+1}.
  \]
  (We use here that $L<n$, hence each term is less than half the next and the sum is at most twice its last term.)
  
  Therefore, by~(\ref{eq0}), the probability that the rumour travels along some path of length $L$ is bounded by
  \[
    \sum_{|\gamma|=L} \left(\frac{2et}{L}\right)^L Q(\gamma)
    \leq
    \left(\frac{2et}{L}\right)^L \left(\frac{8en}{L}\right)^{L/2+1}.
    \leq
    {C_1 n} (C_2 n t^2 / L^3)^{L/2}.
  \]
  which is at most $e^{-L}$ for $L \geq C t^{2/3} n^{1/3}$, completing the proof.
\end{proof}

In (\ref{asynchronous_max_min_form}) we wrote $A(G,s)$ in a max-min form.
We would like to write $S(G,s)$ in a similar way.
To achieve this, let $q_{uv}=q_{vu}$ be the first (discrete) round at which one of $u$ or $v$ learns the rumour.
Suppose the first round \emph{strictly after} $q_{uv}$ at which $u$ calls $v$ is $F_{uv}$, and define $T_{u,v}=F_{uv}-q_{uv}$.
Note that $T_{u,v}$ is a positive integer, and is a geometric random variable: $\Pr{T_{u,v}\geq k} = (1-1/\deg(u))^{k-1}$ for any $k=1,2,\dots$.
Moreover, observe that, both $u$ and $v$ are informed by round $q_{uv}+\min \{ T_{u,v}, T_{v,u} \}$ hence, we have
\begin{equation}
  \label{def_t}
  S\coloneqq S(G,s)
  \leq \max_{v\in V}
  \min_{\gamma \in \Gamma({s,v})}
  \sum_{xy\in E(\Gamma)}
  \min \{ T_{x,y}, T_{y,x} \}.
\end{equation}

We now have a max-min expression for $S(G,s)$.
However, a major difficulty in the synchronous model is that the $\{T_{x,y}\}$ are not independent.
We will stochastically dominate them by another collection $\{X_{x,y}\}$ of random variables, which are independent.
To prove their independence, we first define the synchronous protocol in an equivalent but more convenient way.

Consider for each ordered pair $u\sim v$ a pair of exponential clocks $Z_{u,v},Z'_{u,v}$, both with rate $1/\deg(u)$.
All these clocks are independent.
Initially, the clocks $Z_{u,v}$ are turned on, and the clocks $Z'_{u,v}$ are off.
At later times we may turn off $Z_{u,v}$ and turn on $Z'_{u,v}$.
We say the clocks $Z_{u,v},Z'_{u,v}$ are \emph{located} at vertex $u$.
Continuous time at each vertex will advance separately, though there will be synchronized rounds as defined below.

For each round $1,2,\dots$, we visit the vertices one by one.
For each vertex $u$, we let all active clocks located at $u$ advance, until one of the clocks rings.
If that ring comes from clock $Z_{u,v}$ or $Z'_{u,v}$, we say that $u$ calls $v$ in that round.
Once the choice of calls at every vertex has been made, we use these to perform the push\&pull operations in a round of the protocol.
(Note that the time of the clocks is separate from the discrete rounds of
the synchronous protocol: in each vertex, a different amount of time has elapsed on the clocks.)
Having determined the spread of the rumour at the present round, whenever a vertex $u$ gets informed of the rumour, for each adjacent $v$ we turn off the clocks $Z_{u,v}$ and $Z_{v,u}$, and turn on $Z'_{u,v}$ and $Z'_{v,u}$.
(If $v$ was already informed, these status changes will have already taken place at an earlier round.)
Observe that, because of memorylessness of the exponential distribution, and since all clocks at $u$ have the same rate, this process generates a random sequence of independent uniform neighbours, so it is equivalent to the synchronous protocol.

Now let us see what are the random variables $T_{u,v}$ in this setup.
For each ordered pair $u,v$, observe that the combined collection of ringing times of clocks $Z_{u,v},Z'_{u,v}$ forms a Poisson process $P_{u,v}$ with rate $1/\deg(u)$.
(It does not matter that several initial rings come from $Z$ and subsequent rings from $Z'$.)
Let
\[
  P_u \coloneqq \bigcup_{v\sim u} P_{u,v},
\]
and note that $P_u$ is a Poisson process with rate 1.

For a pair $u,v$, suppose the $q_{uv}$th point in $P_u$ is at $\alpha$, and suppose the first point of $P_{u,v}$ strictly larger than $\alpha$ is at $\beta$.
Then, $T_{u,v}$ is precisely the number of points of $P_u$ in the interval $(\alpha,\beta]$.
Define $X_{u,v} = \beta - \alpha$. 
By construction, $X_{u,v}$ is the first time that clock $Z'_{u,v}$ rung from the time it was turned on, hence it is exponential with rate $1/\deg(u)$.
Since the clocks are independent, the random variables $X_{u,v}$ are also independent.
The times at which the $Z'$ clocks are turned on depend on other clocks in a non-trivial manner, but does not affect the $X_{u,v}$ variables.
Thus we have proven:

\begin{lemma}\label{declaration_of_independence}
  The random variables $\{X_{u,v}\}$ defined above are mutually independent.
\end{lemma}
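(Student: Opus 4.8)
The plan is to make rigorous the construction of the synchronous protocol via the auxiliary clocks $Z_{u,v},Z'_{u,v}$, and to exploit the single structural fact recorded just above the lemma: $X_{u,v}$ equals the time that elapses between the moment the clock $Z'_{u,v}$ is switched on and the first time it rings thereafter. First I would realize all the clocks on an explicit product space. For each ordered pair $(u,v)$ of adjacent vertices, let $(E^{(j)}_{u,v})_{j\ge 1}$ be an i.i.d.\ sequence of $\erv(1/\deg(u))$ random variables, and take all these sequences (over all ordered pairs) to be mutually independent; the clock $Z'_{u,v}$, once switched on at some time $\theta$, rings at the times $\theta+E^{(1)}_{u,v},\ \theta+E^{(1)}_{u,v}+E^{(2)}_{u,v},\dots$, and similarly $Z_{u,v}$ is driven by its own independent i.i.d.\ sequence until it is switched off. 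Memorylessness of the exponential distribution is exactly what makes this driving description consistent with ``a clock that, once turned on, rings at the times of a rate-$1/\deg(u)$ Poisson process'' irrespective of the (random) switch-on time.

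Next I would verify the pointwise identity $X_{u,v}=E^{(1)}_{u,v}$, holding almost surely on this common probability space. Two bookkeeping facts from the construction are needed. (i) Each clock $Z'_{u,v}$ is switched on exactly once during the run of the protocol --- namely at the round when the first of $u,v$ becomes informed --- and is never switched off; hence when it first rings after being switched on it consumes precisely its first inter-ring time $E^{(1)}_{u,v}$, which is what $X_{u,v}$ measures. (ii) The switch-on time $\theta_{u,v}$ of $Z'_{u,v}$ is determined by the protocol \emph{without ever consulting $Z'_{u,v}$}: the clocks $Z'_{u,v}$ and $Z'_{v,u}$ are located at $u$ and at $v$ respectively and become relevant only after $u$, resp.\ $v$, is informed, whereas all rounds up to and including $q_{uv}$ use only the clocks $Z_{a,b}$ together with those clocks $Z'_{a,b}$ that were switched on strictly earlier. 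Consequently the event $\{\theta_{u,v}\le s\}$, and more generally the entire history of the protocol up to the switch-on, is measurable with respect to the $\sigma$-algebra generated by all the clocks other than $Z'_{u,v}$, so that the value $E^{(1)}_{u,v}$ read off after the switch-on is unaffected by $\theta_{u,v}$.

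With the identity $X_{u,v}=E^{(1)}_{u,v}$ in hand the conclusion is immediate: for distinct ordered pairs the variables $E^{(1)}_{u,v}$ are, by construction, mutually independent (they are the first coordinates of independent i.i.d.\ sequences), and therefore so are the $X_{u,v}$. I expect the main obstacle to be a matter of care rather than depth: one must be comfortable that although the switch-on times $\theta_{u,v}$ are highly entangled --- $\theta_{u,v}$ can depend on $E^{(1)}_{u',v'}=X_{u',v'}$ for pairs informed earlier, through cascades of pushes and pulls --- this entanglement flows only \emph{into} $\theta_{u,v}$ and never \emph{out of} $X_{u,v}$; the value of $X_{u,v}$ is the fixed coordinate $E^{(1)}_{u,v}$ no matter what $\theta_{u,v}$ turns out to be. Pinning down fact (ii), i.e.\ checking from the precise round-by-round bookkeeping of which clocks are on or off that $\theta_{u,v}$ genuinely does not peek at $Z'_{u,v}$, is the one place where the argument needs to be written out carefully.
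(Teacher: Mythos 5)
Your proposal is correct and follows essentially the same route as the paper: the paper likewise observes that $X_{u,v}$ is the first ringing time of $Z'_{u,v}$ after it is switched on, and that the switch-on time, while depending on the other clocks, never consults $Z'_{u,v}$ itself, so independence is inherited from the independence of the clocks. Your version merely makes the paper's terse "since the clocks are independent, the $X_{u,v}$ are also independent" rigorous by realizing each clock as an i.i.d.\ sequence of inter-ring times on a product space and identifying $X_{u,v}$ with the fixed coordinate $E^{(1)}_{u,v}$.
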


On the other hand, we can use these to control the $T_{x,y}$:

\begin{lemma}
  \label{lem:coupling}
  For every $K$ and large enough $C \geq C_0(K)$, with probability at least $1 - n^{-K}$, for all adjacent pairs $u,v$ we have $T_{u,v} \leq C \log n + C X_{u,v}$.
\end{lemma}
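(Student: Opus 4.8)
The plan is to relate $T_{u,v}$, the number of points of the rate-$1$ Poisson process $P_u$ in the interval $(\alpha,\beta]$, to $X_{u,v}=\beta-\alpha$, the length of that interval, and argue that a long interval is the only way $T_{u,v}$ can be large. Fix an ordered adjacent pair $u,v$. Recall that conditionally on the position $\alpha$ of the $q_{uv}$th point of $P_u$, the continuation of $P_u$ after $\alpha$ is again a rate-$1$ Poisson process, and $\beta$ is the first point after $\alpha$ of the thinned sub-process $P_{u,v}$ (which has rate $1/\deg(u)$), so $X_{u,v}=\beta-\alpha$ is $\mathrm{Exp}(1/\deg(u))$ and is independent of $\alpha$. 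The count $T_{u,v}$ of points of $P_u$ in $(\alpha,\beta]$ is, conditionally on $X_{u,v}=x$, a Poisson random variable with mean $x$ — except we must be slightly careful: $\beta$ itself is one of the points of $P_u$, and the points of $P_u$ in $(\alpha,\beta)$ are exactly those that were thinned away from $P_{u,v}$. A clean way to see it: conditionally on $X_{u,v}=x$, the number of points of $P_u$ strictly inside $(\alpha,\beta)$ is $\mathrm{Poi}(x\cdot(1-1/\deg(u)))$, and then $\beta$ contributes one more point, so $T_{u,v}\preceq 1+\mathrm{Poi}(x)$ conditionally on $X_{u,v}=x$.

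The main step is then a standard Poisson tail bound combined with the exponential tail of $X_{u,v}$. First I would handle the case $X_{u,v}\ge C\log n$ trivially: if $X_{u,v}\ge C\log n$ then the desired inequality $T_{u,v}\le C\log n + CX_{u,v}$ holds automatically once $C\ge 1$ (indeed $T_{u,v}$ would need to be enormous to violate it, but we do not even need a bound on $T_{u,v}$ here). So it remains to bound the probability, on the event $X_{u,v}< C\log n$, that $T_{u,v}> C\log n + CX_{u,v}$. Condition on $X_{u,v}=x$ with $x< C\log n$; then $T_{u,v}$ is stochastically at most $1+\mathrm{Poi}(x)$, and we want $\Pr{1+\mathrm{Poi}(x) > C\log n + Cx}$. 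Using the Chernoff bound $\Pr{\mathrm{Poi}(\mu)\ge a}\le e^{-a}(e\mu/a)^{a}$ for $a>\mu$, with $a = C\log n + Cx - 1 \ge C\log n/2$ (say) and $\mu = x$, one gets that this probability is at most $e^{-\Theta(C\log n)} = n^{-\Theta(C)}$, uniformly in $x\in[0,C\log n)$; the key point is that $a/\mu = (C\log n + Cx-1)/x \ge C$ is bounded below by $C$, so the factor $(e\mu/a)^a \le (e/C)^a$ is tiny and dominates. Integrating this uniform bound over the distribution of $X_{u,v}$ gives $\Pr{T_{u,v} > C\log n + CX_{u,v}} \le n^{-\Theta(C)}$ for each fixed pair $(u,v)$.

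Finally I would take a union bound over all ordered adjacent pairs $(u,v)$. There are at most $n^2$ such pairs, so choosing $C = C_0(K)$ large enough that $\Theta(C) \ge K+2$ makes the union bound at most $n^2 \cdot n^{-(K+2)} = n^{-K}$, which proves the lemma. The only mild obstacle is the careful conditioning in the first paragraph — making precise that, given the location $\alpha$ of the $q_{uv}$th point, the law of the pair $(X_{u,v}, T_{u,v})$ is exactly the law of (first arrival of a rate-$1/\deg(u)$ process after $0$, number of points of the ambient rate-$1$ process up to and including that arrival), with the "$+1$" for the endpoint $\beta$ accounted for; once that is set up, everything reduces to the Poisson/exponential tail estimate, which is routine. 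Note also that the claimed inequality $T_{u,v}\le C\log n + CX_{u,v}$ is slightly stronger than needed downstream (one only needs it up to constants), so there is ample slack in the choice of constants.
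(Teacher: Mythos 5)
Your proposal takes essentially the same route as the paper: condition on $X_{u,v}=x$, observe that $T_{u,v}-1$ is then Poisson with mean $x(\deg(u)-1)/\deg(u)\le x$, apply a Poisson upper-tail estimate, and finish with a union bound over the adjacent ordered pairs. Two local slips should be repaired, though neither changes the substance. First, the claim that for $X_{u,v}\ge C\log n$ the inequality ``holds automatically'' is false: conditionally on $X_{u,v}=x$ the count $T_{u,v}$ is a Poisson variable plus one and is unbounded, so nothing is deterministic in that regime. The case split is simply unnecessary, because your Chernoff estimate applies uniformly in $x\ge0$: one always has $a=C\log n+Cx-1\ge C\log n-1$ and $a/x\ge C$. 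Second, the Poisson tail bound you quote, $\Pr{\Poi(\mu)\ge a}\le e^{-a}(e\mu/a)^a$, is wrong as written (it already fails for $\mu=1$, $a=10$); the correct form is $e^{-\mu}(e\mu/a)^a$, which still yields a bound of $(e/C)^a\le n^{-\Theta(C\log C)}$ and suffices. The paper instead uses the elementary observation that $\P(\Poi(t)=m)\le e^{-1}\P(\Poi(t)=m-1)$ for $m\ge et$, hence $\P(\Poi(t)>et+m)\le e^{-m}$, but that is a cosmetic difference from your Chernoff bound.
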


\begin{proof}
  We show that for any adjacent pair $x,y$, we have $\P(T_{u,v} > C \log n + C X_{u,v}) \leq n^{-K-2}$, and then apply the union bound over all edges.
  
  Observe that, conditioned on $X_{u,v}=t$, the random variable $T_{u,v}-1$ is Poisson with rate $t \times (\deg(u)-1)/\deg(u)\leq t$.
  Indeed, this is the number of rings over time $t$ of the $\deg(u)-1$ active clocks on edges $(u,w)$ with $w\neq v$.
  Let $\Poi(t)$ denote a Poisson random variable with mean $t>0$.
  For $m\geq et$, we have $\P(\Poi(t) = m) \leq e^{-1} \P(\Poi(t)=m-1)$, hence $\P(\Poi(t) > et+m) \leq e^{-m}$.
  This gives
  \begin{align*}
    \Pr{T_{u,v}-1 > (K+2)\log n + e X_{u,v} | X_{u,v}=t} 
    & \leq \Pr{\Poi(t) > (K+2)\log n + e t} \\
    & \leq n^{-K-2}.
  \end{align*}
%   We will use a standard tail inequality for the Poisson distribution, which follows from Theorem~5.1(iii) in \cite{janson}.
%   Then for any $\alpha\geq1$ we have $    \Pr{\operatorname{Po}(t)\geq\alpha t}
%     \leq (e^{\alpha-1}\alpha^{-\alpha})^t.
% $
%   This gives
%   \begin{align*}
%     \Pr{T_{u,v}-1 > C \log n + C X_{u,v} | X_{u,v}=t} 
%     &\leq
%     \Pr{\operatorname{Po}(t) > C \log n + C t} \\
%     &\leq (e/C)^{C \log n} \leq n^{-K-2}
%   \end{align*}
  The claim follows with $C = \max(e,K+2)$.
\end{proof}

Our main result now follows easily from our lemmas.

\begin{proof}[Proof of Theorem~\ref{mainthm}]
  Given $K$, pick $C$ sufficiently large so that Lemmas~\ref{P:banff} and~\ref{lem:coupling} hold.
  Fix $t\geq1$ and let $L=Ct^{2/3}n^{1/3}$.
  We have
\begin{align*}
&\Pr{S > C t + C L \log n}
\\
&  \leq \P\Bigg[
\Bigg(\max_{v\in V}
\min_{{\gamma \in \Gamma({s,v})}}
\sum_{xy\in E(\gamma)}
\min \{ T_{x,y}, T_{y,x} \}\Bigg)  > Ct  + C L \log n \Bigg]\\
&\leq \P\Bigg[
\Bigg(\max_{v\in V}
\min_{\substack{\gamma \in \Gamma({s,v})\\{{|\gamma|\leq L} }}}
\sum_{xy\in E(\gamma)}
\min \{ T_{x,y}, T_{y,x} \}\Bigg)  > Ct  + C L \log n \Bigg]\\
& \leq 
\P \Bigg[
\Bigg(\max_{v\in V}
\min_{\substack{\gamma \in \Gamma({s,v})\\{{|\gamma|\leq L} }}}
\sum_{xy\in E(\gamma)}
C \log n + C \min\{X_{x,y},X_{y,x}\} \Bigg)  > Ct  + C L \log n \Bigg]
+ n^{-K} \\
& \leq \P \Bigg[
\Bigg(\max_{v\in V}
\min_{\substack{\gamma \in \Gamma({s,v})\\{{|\gamma|\leq L} }}}
\sum_{xy\in E(\gamma)}
 C \min \{X_{x,y}, X_{y,x}\}
\Bigg)  > Ct  \Bigg] + n^{-K}\\
& = \Pr{ A_L  > t } + n^{-K}\\
& \leq \Pr{A  > t } + n^{-K} + e^{-Cn^{1/3}}.
  \end{align*}
  Here, the first inequality is copied from~\eqref{def_t}.
  The second inequality is because restricting the feasible region of a minimization problem can only increase its optimal value.
  The third inequality follows from Lemma~\ref{lem:coupling}.
  The fourth inequality is straightforward.
  The equality follows from the definition of $A_L$ and noting that $\{X_{x,y}\}$ have the same joint distribution as $\{Y_{x,y}\}$,
  and the last inequality follows from Lemma~\ref{P:banff}.
  This completes the proof of Theorem~\ref{mainthm}.
\end{proof}

\section{Proof of Lemma~\ref{lem:stringofdiamondscovertimes}}
\label{sec:lowerbounds}

In this section we show that $2m \leq \e{S(\mathcal{S}_{m,k,l},v_0)} \leq 4m+1$ and $\e{A(\mathcal{S}_{m,k,l},v_0)} = \Theta (\log n + m/\sqrt k)$.
Fix $m\geq1$, $k\ge 1$ and $l\geq 0$, and let $G = \S_{m,k,l}$.
Recall that $v_0,\dots,v_m$ are the vertices connecting the diamonds in $\S_{m,k,l}$

Since the graph distance between $v_0$ and $v_m$ is $2m$, we have $S(G,v_0) \geq 2m$ deterministically.
Fix $0\leq i\leq m-1$ and suppose that at some time $v_i$ is informed and $v_{i+1}$ is uninformed.
We claim that the expected time to inform $v_{i+1}$ is at most 4.
Let $u$ be some common neighbour of $v_i$ and $v_{i+1}$.
It takes 2 rounds in expectation for $u$ to pull the rumour from $v_i$, and another 2 rounds for it to push the rumour to $v_{i+1}$, so the claim follows.
Once all the $v_i$ are informed, every other vertex will be informed in the next round.
Therefore, $\e{S(G,v_0)} \leq 4 m + 1$.

Next we show $\e{A(G,v_0)} = O (\log n + m/\sqrt k)$.
Let $Y_i$ denote the communication time between $v_i$ and $v_{i+1}$ (the first time that $v_{i+1}$ learns the rumour, assuming initially only $v_i$ knows the rumour).
Between $v_i$ and $v_{i+1}$ there are $k$ disjoint paths of length 2, so $Y_i$ is stochastically dominated by $Z:=\min\{Z_1,\dots,Z_k\}$, where the $Z_i$ are independent random variables equal in distribution to the sum of two independent exponential random variables with rate $1/2$.
(The difference between $Y$ and $Z$ stems from calls initiated at $v_i,v_{i+1}$.)
Since each $Z_i$ has density $(t/4) e^{-t/2}$ on $\mathbb{R}_+$, we have 
$\P[Z>t] = \left(1+\frac{t}{2}\right)^k e^{-kt/2}$.
The change of variable $u=k(t/2+1)$ gives
\[
  \E[Z] = \int_0^\infty \P[Z>t] dt
  = \frac{2e^k}{k^{k+1}} \int_k^\infty u^k e^{-u} du.
\]
The integral from $0$ to $\infty$ is $k!$, so
\[
  \E[Z] \leq \frac{2e^k k!}{k^{k+1}} = O(1/\sqrt{k}).
\]
% First we show that $\e{Z} = O(1/\sqrt k)$. Indeed, for any $t\geq 0$ we have
% $$\Pr{Z>t}=\prod_{i}\Pr{Z_i>t}
% \le
% \left(1-\Pr{\erv(1/2)\le t/2}^2 \right)^k
% = \left(2e^{-t/4}-e^{-t/2}\right)^k\:.$$
% Thus, using the inequality $2e^{-t/4}-e^{-t/2} \le e^{-t^2/64}$ valid for $t\in[0,4]$, we find
% \begin{align*}
% \e{Z}=\! \int_{0}^{\infty}  \!  \Pr{Z> t}\mathrm d t & \le
% \int_{0}^{4} e^{-kt^2/64} \mathrm{d} t +
% \int_{4}^{\infty} (2e^{-t/4})^k \mathrm{d} t\\
% &\le
% 8 \sqrt{\pi / k} + \frac{2^{k+2}}{ke^k} = O(1/\sqrt k) \:.\end{align*}
Hence, the expected time for all the $v_i$'s to learn the rumour is at most $O(mk^{-1/2})$. 
After this has happened, any other vertex pulls the rumour in $\erv(1)$ time.
The expected value of the maximum of at most $n$ independent $\erv(1)$ variables is the harmonic sum $H_n \leq 1+\log n$, so $\e{A(G,v_0)}=O(\log n+mk^{-1/2})$.

Finally, we show $\e{A(G,v_0)}=\Omega(\log n+mk^{-1/2})$.
The bound $\e{A(G,v_0)}=\Omega(\log n)$ holds for any $n$-vertex graph $G$ (see~\cite[Theorem~1.3]{us}), so we need only show that $\e{A(G,v_0)}=\Omega(mk^{-1/2})$.
In fact, since each of the intermediate $v_i$ is a cut vertex, we need only show that $\e{Y_i}=\Omega(k^{-1/2})$ for each $i$.

Suppose that at time $s$ only $v_i$ is informed.
For any $t>0$, if $v_{i+1}$ is informed by time $s+t$, then during the time interval $[s,s+t]$, either the clock of $v_i$ has rung at least once, or the clock of $v_{i+1}$ has rung at least once, or the clock of one of their $k$ common neighbours has rung at least twice.
Since the ringing times at each vertex are a Poisson process, we find
\begin{align*}
\Pr{Y_i\leq t} & \leq 2 (1-e^{-t}) + k (1-e^{-t}-te^{-t}) \leq 2t+kt^2/2.
%& = k+2 - (k+2+kt) e^{-t} \\
%& \leq k+2 - (k+2+kt) (1-t) = kt^2 + 2t\:,
\end{align*}
Hence, with $t=1/3\sqrt{k} \leq 1/3$,
\[
  \E[Y_i] \geq \frac{1}{3\sqrt k} \Pr{Y_i\geq \frac{1}{3\sqrt k}}
  \geq 
  \frac{1}{3\sqrt k} (1-2/3-1/18) = \Omega(1/\sqrt k) \:,
\]
completing the proof of Lemma~\ref{lem:stringofdiamondscovertimes}.

\subparagraph*{Acknowledgments.}

We would like to thank an anonymous referee of the RANDOM'2017 workshop for pointing out a mistake in the statement of Corollary~\ref{cor2} in an earlier version.

\end{document}